\theoremstyle{thmstyleone}%
\newtheorem{theorem}{Theorem}
\newtheorem{proposition}[theorem]{Proposition}%
\theoremstyle{thmstyletwo}%
\theoremstyle{thmstylethree}%
\newtheorem{definition}{Definition}%
\begin{document}

\title[A complete classification of endomorphisms of Kiselman's semigroup]{A complete classification of endomorphisms of Kiselman's semigroup}


\author*[1]{\fnm{Luka} \sur{Andren\v sek}}\email{andrensek.luka@gmail.com}

\affil*[1]{\orgdiv{Department of Mathematics}, \orgname{Faculty of Mathematics and Physics, University of Ljubljana}, \orgaddress{\street{Jadranska ulica 19}, \city{Ljubljana}, \postcode{SI-1000},  \country{Slovenia}}}


\abstract{Kiselman's semigroup $K_n$ was studied by Kudryavtseva and Mazorchuk, who posed the question of whether it is possible to classify all endomorphisms of $K_n$. In this paper, we provide a complete classification of endomorphisms of $K_n$ and present a Boolean matrix monoid that is isomorphic to $\text{End}(K_n)$.}

\keywords{Kiselman's semigroup, endomorphism, monoid, semigroup}



\maketitle

\section{Introduction and the main result}\label{sec1}

Kiselman's semigroup was first introduced by Kiselman in the context of convexity theory in \cite{Kiselman}. A generalisation was later introduced and thoroughly studied by 
Kudryavtseva and Mazorchuk in \cite{GK}. The semigroup arises naturally in various settings such as graph dynamics as discussed by Collina and D’Andrea in \cite{Graphs}.

In \cite{ganyushkin11}, Ganyushkin and Mazorchuk introduced Kiselman quotients of 0-Hecke monoids associated with simply laced Dynkin diagrams, and
more generally, Hecke--Kiselman monoids associated with finite simple digraphs. Further properties and identities of Hecke--Kiselman monoids were studied by Ashikhmin, Volkov and Zhang in \cite{Ashikhmin15},  
irreducible representations of Hecke--Kiselman monoids were studied by Wiertel in \cite{wiertel22},
and the word problem for some particular Hecke--Kiselman monoids was studied by Lebed in \cite{Lebed2025}. 
Furthermore, the finiteness of Hecke--Kiselman monoids was investigated by Aragona and D’Andrea in 
\cite{aragona13}, while D’Andrea and Stella showed in \cite{dandrea23} that the cardinality of Kiselman's semigroups grows double-exponentially. 
 
Kiselman's semigroup is also related to 0-Hecke algebras \cite{norton79}. Properties and structure of Hecke--Kiselman algebras were studied by various authors in \cite{okninski20, okninski21, wiertel23, Mecel2019}.  

In order to formulate the main result of our paper, we define the following notions.
Let $M \in \mathbb{R}^{m \times n}$ be a real $m \times n$ matrix. Then for $1 \leq i \leq m$ and $1 \leq j \leq n$ we denote by $M_{i, j}$ the entry of the matrix $M$ in the $i$-th row and the $j$-th column. 
\begin{definition}
   Let $A \in \mathbb{R}^{2 \times 2}$ and $M \in \mathbb{R}^{n \times n}$ be real matrices with $n \geq 2$. We say that $A$ is a \emph{$2 \times 2$ submatrix} of 
   $M$ if there exist columns $i, j \in \{1, 2, \dots, n\}$ and rows $x, y \in \{1, 2, \dots, n\}$ with $i < j$ and $x < y$ such that 
   \[
      A = 
      \begin{pmatrix}
         M_{x, i} & M_{x, j} \\
         M_{y, i} & M_{y, j}
      \end{pmatrix}.
   \]
\end{definition}

Now we define the set of Boolean matrices that avoid a certain submatrix.

\begin{definition}\label{Dn}
   Let $n \geq 2$ be an integer, and let
   \begin{equation}\label{pattern}
      P = 
      \begin{pmatrix}
         0 & 1 \\
         1 & 0
      \end{pmatrix}.
   \end{equation}
   We define the set $D_n$ as 
   \[
      D_n = \{ M \in \{0, 1\}^{n \times n} \mid P \text{ is not a } 2 \times 2 \text{ submatrix of } M \}.
   \]
\end{definition}

On the set $D_n$, we define a binary operation $\cdot$ as follows. Let $A, B \in D_n$. Then $C = A \cdot B$, with
\begin{align*}
   C_{i, j} = \bigvee_{k=1}^n (A_{i, k} \land B_{k, j}).
\end{align*}
Here, $\lor$ denotes the Boolean join and $\land$ denotes the Boolean meet on $\{0, 1\}$. One can easily verify that $(D_n, \cdot)$ is a monoid, and its unit element is the identity matrix. 

Let $\mathcal{A} = \{a_1, a_2, \dots , a_n\}$ be a finite alphabet. Define Kiselman's semigroup $K_n$ by
\begin{align*}
   K_n = \langle a_1, a_2, \dots, a_n \mid a_i^2 = a_i, \; a_i a_j a_i = a_j a_i a_j = a_j a_i, \; 1 \leq i < j \leq n \rangle .
\end{align*}

In \cite{GK}, the authors showed in Proposition 20 that the only automorphism of $K_n$ is the identity, and that the only antiautomorphism is the unique extension to an antiautomorphism of the map 
$a_i \mapsto a_{n-i+1}$. They leave open the question (Question~21) concerning the classification of endomorphisms of $K_n$, which has not yet been answered.

The main result of this paper is the following theorem.
\begin{theorem}\label{thm:main}
   Let $n \geq 2$ be an integer. Then $\operatorname{End}(K_n) \cong (D_n, \cdot)$.
\end{theorem}

\section{Proof of the main result}
\subsection{Properties of idempotents of $K_n$}

Let $X \subseteq \{1, 2, \dots , n\}$. If $X = \emptyset$ define $e_X = e$, the unit element in $K_n$. Otherwise, write $X = \{i_1, i_2, \dots , i_k \mid i_1 > i_2 > \dots > i_k\}$. Then set
$e_X = a_{i_1} a_{i_2} \dots a_{i_k}$. Now define $E_n = \{e_X \mid X \subseteq \{1, 2, \dots, n\}\}$. In Proposition 11 of \cite{GK}, it is shown that every element of $E_n$ is an idempotent, that the elements of $E_n$ are pairwise distinct, and that every idempotent of $K_n$ is of the form $e_X$ for some $X \subseteq \{1, 2, \dots , n\}$. Consequently, the number of idempotents in $K_n$ is precisely $2^n$.

In \cite[Proposition 14]{GK}, the following proposition was proved. 

\begin{proposition}\label{idem:eq}
   Let $X, Y \subseteq \{1, 2, \dots, n\}$. Then the following conditions are equivalent:
   \begin{enumerate}
      \item $e_X e_Y$ is an idempotent.
      \item $e_X e_Y = e_{X \cup Y}$.
      \item For every $x \in X \setminus Y$ and every $y \in Y \setminus X$ we have $x > y$.
   \end{enumerate}
\end{proposition}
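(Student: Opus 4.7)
The plan is to prove the three implications $(2) \Rightarrow (1)$, $(3) \Rightarrow (2)$, and $(1) \Rightarrow (3)$ separately. The first is immediate: granted $e_X e_Y = e_{X \cup Y}$, idempotence is inherited from the already-cited fact that every $e_Z$ is an idempotent.

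The implication $(3) \Rightarrow (2)$ I will prove by induction on $|X \cap Y|$. In the base case $X \cap Y = \emptyset$, condition (3) says every index in $X$ strictly exceeds every index in $Y$, so the concatenation of the strictly decreasing words $e_X$ and $e_Y$ is itself strictly decreasing and indexed exactly by $X \cup Y$, hence equals $e_{X \cup Y}$ on the nose. For the inductive step I will fix some $i \in X \cap Y$ and apply the relations $a_i a_j a_i = a_j a_i a_j = a_j a_i$ (for $i < j$) to collapse one of the two occurrences of $a_i$ in $e_X e_Y$, arguing that the result is of the form $e_{X'} e_{Y'}$ with $X' \cup Y' = X \cup Y$, $|X' \cap Y'| = |X \cap Y| - 1$, and still satisfying (3). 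The careful bookkeeping about which relation to apply where, guided by condition (3) to ensure nothing obstructs the reduction, is the technical heart of this step.

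For $(1) \Rightarrow (3)$ I split the work into two phases. In the first phase I use a support invariant: every defining relation of $K_n$ preserves the set of indices appearing in a word (both sides of $a_i^2 = a_i$ and of $a_i a_j a_i = a_j a_i a_j = a_j a_i$ use identical index sets), so the map $\operatorname{supp} : K_n \to 2^{\{1, \dots, n\}}$ is well-defined, with $\operatorname{supp}(e_Z) = Z$. Hence if $e_X e_Y = e_Z$, comparing supports forces $Z = X \cup Y$, so in fact $e_X e_Y = e_{X \cup Y}$, i.e.\ condition (2) already follows from (1). The second phase, deducing (3) from this equality, is the step I expect to be the main obstacle. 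My approach is a contradiction argument: if (3) fails, choose $x \in X \setminus Y$ and $y \in Y \setminus X$ with $x < y$ and then distinguish $e_X e_Y$ from $e_{X \cup Y}$ by an explicit invariant. A natural candidate is to pass to a quotient of $K_n$ (for example, a homomorphism to $K_2$ collapsing indices above and below a well-chosen cutoff, which respects the defining relations), or to invoke a faithful transformation representation of $K_n$ from \cite{GK}. The delicate step is verifying that the chosen test is sharp enough to separate $e_X e_Y$ from $e_{X \cup Y}$ for every counterexample pair $(X, Y)$ and every witness $(x, y)$; a case analysis on the positions of the remaining elements of $X$ and $Y$ relative to the interval $[x, y]$, possibly combined with a further induction on $|X| + |Y|$, is likely required to close the argument.
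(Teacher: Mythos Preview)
The paper does not supply its own proof of this proposition: it is quoted from \cite{GK} with the sentence ``Furthermore, in \cite{GK} the following proposition is proven,'' and no argument follows. So there is no in-paper proof to compare your proposal against.

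On the substance of your outline: the implication $(2)\Rightarrow(1)$ is indeed immediate, and your reduction of $(1)\Rightarrow(3)$ to $(2)\Rightarrow(3)$ via the content map is correct and efficient --- this is exactly the map $c$ the paper recalls in its Preliminaries, and the fact that every idempotent has the form $e_Z$ is also already cited there. For $(2)\Rightarrow(3)$ your quotient idea works cleanly once you choose the right target: rather than a threshold map, send $a_x\mapsto a_1$, $a_y\mapsto a_2$, and $a_k\mapsto e$ for all $k\notin\{x,y\}$. One checks directly that this respects the defining relations, hence gives a homomorphism $K_n\to K_2$; under it $e_Xe_Y\mapsto a_1a_2$ while $e_{X\cup Y}\mapsto a_2a_1$, and these are distinct in $K_2$. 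No further case analysis or induction on $|X|+|Y|$ is needed.

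The step that genuinely needs more than ``bookkeeping'' is $(3)\Rightarrow(2)$. Your plan to pick some $i\in X\cap Y$ and collapse one copy of $a_i$ is not automatic: the two occurrences of $a_i$ in $e_Xe_Y$ are separated by other letters, and the defining relations do not let a single letter commute past another. What makes the step go through is the auxiliary fact that whenever $j$ strictly exceeds every index appearing in a word $w$, one has $a_jwa_j=a_jw$; this is proved by induction on $|w|$ via
\[
a_jw'a_ka_j \;=\; (a_jw'a_j)\,a_ka_j \;=\; a_jw'(a_ja_ka_j) \;=\; a_jw'a_ja_k \;=\; a_jw'a_k.
\]
With this absorption lemma in hand, an induction on $|Y|$ --- peeling off $a_{\max Y}$ from the left of $e_Y$ and splitting on whether $\max Y\in X$ --- is smoother than inducting on $|X\cap Y|$. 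Either way, you should expect to state and prove that lemma explicitly; the raw relations alone will not visibly collapse the duplicate letter across an intervening block.
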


Define the \emph{content} map $c : K_n \rightarrow 2^{\{1, 2, \dots, n\}}$, where $c(a)$ is the set of all $i$'s such that $a_i$ appears in $a$. In \cite{GK}, it is shown that this map is well defined.
Furthermore, in Lemma 10 of \cite{GK} it is shown that the following holds.

\begin{proposition}
  The map $c$ is a semigroup epimorphism from $K_n$ onto the semigroup $(2^{\{1, 2, \dots, n\}}, \cup)$.
\end{proposition}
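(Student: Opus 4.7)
The plan is to verify two things: that $c$ is a semigroup homomorphism with respect to $\cup$, and that it is surjective. Well-definedness of $c$ on $K_n$ is already available, so every element $a \in K_n$ has a well-defined value $c(a) \subset \{1, 2, \dots, n\}$ regardless of which representing word is chosen.

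For the homomorphism property, I would fix $a, b \in K_n$ and pick representing words $w_a = a_{i_1} \cdots a_{i_k}$ of $a$ and $w_b = a_{j_1} \cdots a_{j_\ell}$ of $b$. The concatenation $w_a w_b$ then represents the product $ab$, and directly from the definition of the content map,
\[
c(ab) \;=\; \{i_1, \dots, i_k\} \cup \{j_1, \dots, j_\ell\} \;=\; c(a) \cup c(b).
\]
Passing to representatives is legitimate precisely because $c$ is well-defined: each defining relation of $K_n$ (namely $a_i^2 = a_i$ and $a_i a_j a_i = a_j a_i a_j = a_j a_i$) has the same set of indices on both sides, so the content of a word depends only on its equivalence class.

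For surjectivity I would argue directly from the existence of the idempotents $e_X$ established earlier. Given any $X \subset \{1, 2, \dots, n\}$, if $X = \emptyset$ then $c(e) = \emptyset$ by the convention $e_\emptyset = e$; otherwise $e_X$ is by definition a product of the generators $a_i$ with $i \in X$, so $c(e_X) = X$. Hence every subset of $\{1, 2, \dots, n\}$ lies in the image of $c$.

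No step here is a serious obstacle: the content map is essentially the projection of a word to the set of letters occurring in it, and both the homomorphism property and surjectivity follow immediately once well-definedness is in hand. The only point worth being careful about is that one must pass through representatives to compute $c(ab)$, and this is exactly where well-definedness is invoked.
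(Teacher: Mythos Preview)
Your argument is correct: the homomorphism property follows from taking representing words and using well-definedness, and surjectivity follows from $c(e_X)=X$. Note, however, that the paper does not actually supply a proof of this proposition---it is stated without proof in the Preliminaries section as a background fact (implicitly from \cite{GK}), so there is no paper proof to compare against.
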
 

We now prove the following proposition.
\begin{proposition}\label{gate}
   Let $X, Y \subseteq \{1, 2, \dots , n\}$. The following two conditions are equivalent:
   \begin{enumerate}
      \item For every $x \in X \setminus Y$ and every $y \in Y \setminus X$ we have $x > y$.
      \item $e_X e_Y$ is an idempotent and $e_X e_Y e_X = e_Y e_X e_Y = e_X e_Y$.
   \end{enumerate}
\end{proposition}

\begin{proof}
   The implication \((2) \Rightarrow (1)\) is immediate from Proposition~\ref{idem:eq}.

   By Proposition~\ref{idem:eq}, to prove \((1) \Rightarrow (2)\), it suffices to show the equalities:
   \[
   e_X e_Y e_X = e_Y e_X e_Y = e_X e_Y.
   \]

   Since $e_X e_Y$ is an idempotent, it is equal to $e_{X \cup Y}$. Observe that condition $(3)$ in Proposition~\ref{idem:eq} holds for the pairs of sets $(X \cup Y, X)$ and $(Y, X \cup Y)$, respectively. It follows that 
   \begin{align*}
      e_X e_Y e_X = (e_X e_Y) e_X = e_{X \cup Y} e_X = e_{(X \cup Y) \cup X} = e_{X \cup Y} = e_X e_Y,
   \end{align*}
   and similarly
   \begin{align*}
      e_Y e_X e_Y = e_Y (e_X e_Y ) = e_Y e_{X \cup Y } = e_{Y \cup (X \cup Y)} = e_{X \cup Y} = e_X e_Y.
   \end{align*}
   This concludes the proof.
\end{proof}

\subsection{Isomorphism of $\operatorname{End}(K_n)$ with the monoid of monotone sequences of sets $M_n$}

In this section, we construct an isomorphism from $\text{End}(K_n)$ to the set of sequences of sets satisfying the \textit{monotonicity} condition, which we now define. 

\begin{definition}
   Let $\mathbf{X} = (X_1, X_2, \dots, X_n)$ be a sequence of sets, where $X_i \subseteq \{1, 2, \dots, n\}$ for $1 \leq i \leq n$. We say that $\mathbf{X}$ is \emph{monotone} if
   \begin{align*}
      &\text{for all } i, j \in \{1, 2, \dots, n\} \text{ with } j > i, \\
      &\text{and for all } x \in X_j \setminus X_i \text{ and } y \in X_i \setminus X_j, \text{ we have } x > y.
   \end{align*}
   We denote by $M_n$ the set
   \[
   M_n = \{(X_1, X_2, \dots, X_n) \mid (X_1, X_2, \dots, X_n) \text{ is monotone}\}.
   \]
\end{definition}

On the set $M_n$, we define a binary operation $\ast$ as follows.
Let $\mathbf{X} = (X_1, X_2, \dots, X_n)$ and $\mathbf{Y} = (Y_1, Y_2, \dots, Y_n)$ be elements of $M_n$. 
We define $\mathbf{Z} = \mathbf{X} \ast \mathbf{Y}$, where $\mathbf{Z} = (Z_1, Z_2, \dots, Z_n)$ is given by
\begin{align*}
   Z_i = \bigcup\limits_{j \in Y_i} X_j
\end{align*}
for $1 \leq i \leq n$. It is easy to verify that $(M_n, \ast)$ is a monoid, with its unit element given by the sequence $(\{1\}, \{2\}, \dots,\{n\})$.

\begin{theorem}\label{thm:Phi}
   The map $\Phi : \operatorname{End}(K_n) \rightarrow M_n$, defined by
   \begin{align*}
      \Phi(\varphi) = \big( c(\varphi(a_1)), c(\varphi(a_2)), \dots, c(\varphi(a_n)) \big)
   \end{align*}
   is an isomorphism of monoids $\operatorname{End}(K_n)$ and $(M_n, \ast)$.
\end{theorem}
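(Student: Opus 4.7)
The plan is to verify in sequence that $\Phi$ lands in $M_n$, that $\Phi$ is a monoid homomorphism, that $\Phi$ is injective, and that $\Phi$ is surjective. The central tool throughout is Lemma \ref{gate}, which translates the braid-style relation between two idempotents $e_X, e_Y$ into the ``staircase'' condition on $X$ and $Y$, and is applied once in each direction.

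First I would check well-definedness. For any $\varphi \in \operatorname{End}(K_n)$, the element $\varphi(a_i)$ is an idempotent (since $a_i^2 = a_i$), hence of the form $e_{X_i}$ where necessarily $X_i = c(\varphi(a_i))$. Applying $\varphi$ to the relation $a_i a_j a_i = a_j a_i a_j = a_j a_i$ for $i < j$ gives
\begin{align*}
e_{X_i} e_{X_j} e_{X_i} = e_{X_j} e_{X_i} e_{X_j} = e_{X_j} e_{X_i},
\end{align*}
and matching this with the statement of Lemma \ref{gate} requires $X = X_j$, $Y = X_i$. The equivalent condition $\forall x \in X_j \setminus X_i, \forall y \in X_i \setminus X_j, \; x > y$ is exactly the monotonicity condition, so $\Phi(\varphi) \in M_n$.

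Next, the homomorphism property: writing $\psi(a_i)$ as any word in the generators, the content $c(\psi(a_i))$ records exactly the set of letters that appear, so using that $c$ is a homomorphism to $(2^{\{1,\ldots,n\}}, \cup)$ one computes
\begin{align*}
c(\varphi(\psi(a_i))) = \bigcup_{j \in c(\psi(a_i))} c(\varphi(a_j)),
\end{align*}
which is the $i$-th component of $\Phi(\varphi) \ast \Phi(\psi)$. The identity endomorphism is sent to $(\{1\},\dots,\{n\})$. Injectivity is then immediate: if $c(\varphi(a_i)) = c(\psi(a_i))$ for all $i$, the idempotents $\varphi(a_i)$ and $\psi(a_i)$ coincide, so $\varphi$ and $\psi$ agree on a generating set and hence everywhere.

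The main step, and the one I expect requires the most care, is surjectivity, where Lemma \ref{gate} is used in the converse direction. Given $\mathbf{X} = (X_1, \ldots, X_n) \in M_n$, I would define a map on the free semigroup over $\mathcal{A}$ by $a_i \mapsto e_{X_i}$ and show it factors through the presentation of $K_n$. The relations $a_i^2 = a_i$ hold because each $e_{X_i}$ is idempotent; for $i < j$, the monotonicity condition applied to $X_j$ and $X_i$ provides exactly the hypothesis of Lemma \ref{gate} with $X = X_j$, $Y = X_i$, yielding $e_{X_i} e_{X_j} e_{X_i} = e_{X_j} e_{X_i} e_{X_j} = e_{X_j} e_{X_i}$. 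The resulting endomorphism $\varphi$ satisfies $\Phi(\varphi) = \mathbf{X}$. The only genuine subtlety throughout is the bookkeeping of which set plays which role in Lemma \ref{gate}, since the relation is stated as $a_i a_j a_i = a_j a_i a_j = a_j a_i$ with $i < j$, forcing the ``outer'' set after substitution to be $X_j$ rather than $X_i$.
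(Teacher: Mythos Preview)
Your proof is correct and follows essentially the same route as the paper: well-definedness and surjectivity both via Lemma~\ref{gate} applied in the two directions, injectivity from the fact that an idempotent is determined by its content, and the homomorphism property by unwinding the composition on generators. Your verification of the homomorphism identity is in fact slightly slicker than the paper's, since you apply the content epimorphism $c$ directly rather than first identifying $(\psi\circ\varphi)(a_i)$ as the specific idempotent $e_{\cup_j Y_j}$ via Proposition~\ref{idem:eq}.
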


\begin{proof}
   First, we show that for every $\varphi \in \operatorname{End}(K_n)$, $\Phi(\varphi)$ lies in $M_n$. Let $\varphi \in \operatorname{End}(K_n)$. 
   Since $\varphi$ is a homomorphism, it maps idempotents to idempotents. In particular, $\varphi(a_i)$ is an idempotent for 
   $1 \leq i \leq n$. As every idempotent in $K_n$ is of the form $e_X$, it follows that there exists a sequence $(X_1, X_2, \dots, X_n)$ of subsets of $\{1, 2, \dots, n\}$ such that
   \begin{align*}
      \varphi(a_i) = e_{X_i}
   \end{align*}
   for $1 \leq i \leq n$. Thus we have $\Phi(\varphi) = (X_1, X_2, \dots, X_n)$. Now observe that $a_j a_i$ is an idempotent, whenever $j > i$, and in this case we also have $a_ja_ia_j = a_i a_j a_i = a_j a_i$. Applying $\varphi$ to these equalities and using the fact that $\varphi$ preserves idempotents yields:
   \begin{align*}
      & e_{X_j} e_{X_i} \text{ is an idempotent and} \\
      & e_{X_j} e_{X_i} e_{X_j} = e_{X_i} e_{X_j} e_{X_i} = e_{X_j} e_{X_i}
   \end{align*}
   whenever $j > i$. By Proposition~\ref{gate} it follows that for every $j>i$, the condition 
   \begin{align*}
      \text{for every } x \in X_j \setminus X_i\; \text{and every } y \in X_i \setminus X_j\; \text{we have } x > y
   \end{align*}
   must be satisfied. This implies the monotonicity of $(X_1, X_2, \dots, X_n)$ and hence $\Phi(\varphi) \in M_n$.

   The fact that $\Phi$ is injective follows quickly, since for any endomorphism of $K_n$, the images of the generators are of the form $e_{X_i}$ for some $X_i \subseteq \{1, 2, \dots, n\}$.
   Observe that an idempotent in $K_n$ is uniquely determined by its content. Since an endomorphism is uniquely determined by the images of the generators, it follows that an endomorphism of $K_n$ is uniquely determined by the contents of the images of the generators, from which the injectivity of $\Phi$ follows.

   Now we prove surjectivity. Let $\mathbf{X} = (X_1, X_2, \dots, X_n) \in M_n$. Define the map $\psi : \{a_1, a_2, \dots, a_n\} \rightarrow K_n$ by 
   \begin{align*}
      \psi(a_i) = e_{X_i}.
   \end{align*}
   Set $b_i = \psi(a_i)$ for $1 \leq i \leq n$. Clearly, each $b_i$ is an idempotent. Furthermore, let $j > i$. Then, by Proposition~\ref{gate} and the monotonicity condition imposed on $(X_1, X_2, \dots, X_n)$ it must follow that
   \begin{align*}
      b_j b_i b_j = b_i b_j b_i = b_j b_i.
   \end{align*}
   Thus, the elements $\{b_1, b_2, \dots, b_n\}$ satisfy the defining relations of $K_n$, so we can uniquely extend $\psi$ to an endomorphism of $K_n$. By construction, we have that $\Phi(\psi) = \mathbf{X}$, which proves surjectivity. 


   Now we prove that $\Phi$ is a homomorphism. Let $\psi, \varphi \in \operatorname{End}(K_n)$. Let
   $\psi(a_j) = e_{Y_j}$ and $\varphi(a_j) = e_{X_j}$ for $1 \leq j \leq n$. Also write $e_{X_i} = a_{m_l} a_{m_{l-1}} \dots a_{m_1}$, where $X_i = \{m_l, m_{l-1}, \dots, m_1\}$ and $m_l > m_{l-1} > \dots > m_1$. We compute
   \begin{align*}
      \hspace{-1.3cm} (\psi \circ \varphi)(a_i) &= \psi(e_{X_i}) \\
      &= \psi(a_{m_l} a_{m_{l-1}} \dots a_{m_2} a_{m_1})  \\
      &= \psi(a_{m_l}) \psi(a_{m_{l-1}}) \dots \psi(a_{m_2}) \psi(a_{m_1}) \\
      &= e_{Y_{m_l}} e_{Y_{m_{l-1}}} \dots e_{Y_{m_2}} e_{Y_{m_1}} \\
      &= e_{\cup_{1 \leq k \leq l} Y_{m_k}}.
   \end{align*}
   The last equality holds since $(\psi \circ \varphi)(a_i)$ is an idempotent, every idempotent in $K_n$ is of the form $e_X$, and the content of an element in $K_n$ is preserved under its defining relations, since the content map $c$ is well defined. Now, if we denote by
   $(\Phi(\psi \circ \varphi))_i$ the $i$-th component of the sequence of sets $\Phi(\psi \circ \varphi)$, we get
   
   \begin{align*}
      \hspace{-6cm} \big(\Phi(\psi \circ \varphi)\big)_i &= c((\psi \circ \varphi)(a_i)) \\
      &= c(e_{\cup_{1 \leq k \leq l} Y_{m_k}}) \\
      &= \bigcup\limits_{k =1}^{l} Y_{m_k} \\
      &= \bigcup\limits_{j \in X_i} Y_{j},
   \end{align*}
   since $X_i = \{m_l, m_{l-1}, \dots, m_1\}$.

   On the other hand, one has
   \begin{align*}
      \big(\Phi(\psi) \ast \Phi(\varphi)\big)_i &= \big((c(\psi(a_1)), c(\psi(a_2)), \dots, c(\psi(a_n))) \ast (c(\varphi(a_1)), c(\varphi(a_2)), \dots, c(\varphi(a_n)))\big)_i \\
      &= \big((Y_1, Y_2, \dots, Y_n) \ast (X_1, X_2, \dots, X_n)\big)_i \\
      &= \bigcup\limits_{j \in X_i} Y_j.
   \end{align*}
   We have shown that $\big(\Phi(\psi \circ \varphi)\big)_i = \big(\Phi(\psi) \ast \Phi(\varphi)\big)_i$,
   and since $i$ was chosen arbitrarily, it follows that
   \begin{align*}
      \Phi(\psi \circ \varphi) = \Phi(\psi) \ast \Phi(\varphi).
   \end{align*}
   Since the identity map is the unit of $\operatorname{End}(K_n)$, and because of
   \begin{align*}
      \Phi(\operatorname{id}_{K_n}) = (\{1\}, \{2\}, \dots, \{n\}),
   \end{align*}
   we conclude that $\Phi$ is an isomorphism of monoids. This completes the proof. 

\end{proof}

\subsection{Isomorphism of $M_n$ with $D_n$}

One can conclude that an $n \times n$ Boolean matrix $M$ is an element of $D_n$, as defined in Definition~\ref{Dn},  for $n \geq 2$, if and only if there do not exist columns 
$i, j \in \{1, 2, \dots, n\}$ and rows $x, y \in \{1, 2, \dots, n\}$ with $i < j$, and $x < y$, such that
\begin{align*}
   &M_{x, i} = 0, \quad M_{x, j} = 1, \\
   &M_{y, i} = 1, \quad M_{y, j} = 0.
\end{align*}
Also observe that if $A$ and $B$ are $n \times n$ Boolean matrices, and if for all $i, j \in \{1, 2, \dots, n\}$ we have that 
$A_{i, j} = 1$ if and only if $B_{i, j} = 1$, then $A = B$.  

Let $(e_1, e_2, \dots, e_n)$ be the standard orthonormal basis of $\mathbb{R}^n$. Define the map 
$\chi : 2^{\{1, 2, \dots, n\}} \rightarrow \{0, 1\}^n$ by
\begin{align*}
   \chi(X) = \sum_{i \in X} e_i.
\end{align*}
If further $v \in \mathbb{R}^n$ is a vector, then for $1 \leq i \leq n$ we denote by $v_i$ the $i$-th element of the vector $v$. 

\begin{theorem}\label{thm:Psi}
   The map $\Psi : M_n \rightarrow D_n$, defined by
   \[
      \Psi(X_1, X_2, \dots, X_n) = (\chi(X_1), \chi(X_2), \dots, \chi(X_n))
   \]
   where the right-hand side is interpreted as a matrix whose $i$-th column is $\chi(X_i)$, is an isomorphism of monoids $(M_n, \ast)$ and $(D_n, \cdot)$.
\end{theorem}

\begin{proof}
   We first show that $\Psi$ maps into $D_n$. Let $\mathbf{X} = (X_1, X_2, \dots, X_n) \in M_n$ and denote $M = \Psi(\mathbf{X})$.
   Observe that $M_{x, i} = 1$ if and only if $(\chi(X_i))_x = 1$, which is equivalent to $x \in X_i$.
   Now assume $i < j$ and assume that the following equalities hold
   \begin{align*}
      &M_{x, i} = 0, \quad M_{x, j} = 1, \\
      &M_{y, i} = 1, \quad M_{y, j} = 0.
   \end{align*}
   We aim to show that $x < y$ is impossible. 
   The upper equalities are equivalent to 
   \begin{align*}
      &x \notin X_i, \quad x \in X_j, \\
      &y \in X_i, \quad y \notin X_j, 
   \end{align*}
   which is further equivalent to 
   \[ 
      x \in X_j \setminus X_i \quad \text{and} \quad y \in X_i \setminus X_j. 
   \]
   We apply monotonicity of $(X_1, X_2, \dots, X_n)$ and get $x>y$. Hence, $\Psi(\mathbf{X}) \in D_n$.

   The map $\Psi$ is obviously injective.

   We now prove surjectivity. Let $M \in D_n$. Define $X_i = \{x \in \{1, 2, \dots, n\} \mid M_{x, i} = 1\}$. First, we check that $(X_1, X_2, \dots, X_n)$ is monotone.
   Let $i < j$ and suppose $x \in X_j \setminus X_i$ and $y \in X_i \setminus X_j$. Then we have
   \begin{align*}
      M_{x, i} = 0, \quad M_{x, j} = 1, \\
      M_{y, i} = 1, \quad M_{y, j} = 0.
   \end{align*}
   Since $M \in D_n$, it must follow that $x>y$, and hence $(X_1, X_2, \dots, X_n)$ is monotone.
   Furthermore, for $x, i \in \{1, 2, \dots, n\}$ we have that $(\Psi(X_1, X_2, \dots, X_n))_{x, i} = 1$ if and only if $x \in X_i$, which is equivalent to  $M_{x, i} = 1$ by the definition of $X_i$.
   Hence
   \[
   \Psi(X_1, X_2, \dots, X_n) = M,
   \]
   and surjectivity is proven.

   Now we show that $\Psi$ is a homomorphism. Let $\mathbf{X}, \mathbf{Y} \in M_n$, where
   $\mathbf{X} = (X_1, X_2, \dots, X_n)$ and $\mathbf{Y} = (Y_1, Y_2, \dots, Y_n)$. We compute
   \begin{align*}
      \Psi(\mathbf{X} \ast \mathbf{Y}) = \Psi(\bigcup_{j \in Y_1} X_j, \bigcup_{j \in Y_2} X_j, \dots, \bigcup_{j \in Y_n} X_j).
   \end{align*}
   It follows that for $x, i \in \{1, 2, \dots, n\}$ we have 
   \begin{align*}
      \Psi(\mathbf{X} \ast \mathbf{Y})_{x, i} = 1 &\Leftrightarrow x \in \bigcup_{j\in Y_i}X_j \\
      &\Leftrightarrow \text{there exists } k \in Y_i \text{ such that } x \in X_k.
   \end{align*}
   On the other hand,
   \begin{align*}
      (\Psi(\mathbf{X}) \cdot \Psi(\mathbf{Y}))_{x, i} = \bigvee_{k=1}^n (\Psi(\mathbf{X})_{x, k} \land \Psi(\mathbf{Y})_{k, i}).
   \end{align*}
   Hence, we have
   \begin{align*}
      (\Psi(\mathbf{X}) \cdot \Psi(\mathbf{Y}))_{x, i} = 1 &\Leftrightarrow \text{there exists } k \in \{1, 2, \dots, n\} \text{ such that } \Psi(\mathbf{X})_{x, k} = \Psi(\mathbf{Y})_{k, i} = 1 \\
      &\Leftrightarrow \text{there exists } k \in \{1, 2, \dots, n\} \text{ such that } x \in X_k \text{ and } k \in Y_i \\
      &\Leftrightarrow \text{there exists } k \in Y_i \text{ such that } x \in X_k.
   \end{align*}
   We thus obtain
   \[
      \Psi(\mathbf{X} \ast \mathbf{Y})_{x, i} = (\Psi(\mathbf{X}) \cdot \Psi(\mathbf{Y}))_{x, i}.
   \]
   Since $x$ and $i$ were chosen arbitrarily, we conclude that
   \begin{align*}
      \Psi(\mathbf{X} \ast \mathbf{Y}) = \Psi(\mathbf{X}) \cdot \Psi(\mathbf{Y}).
   \end{align*}
   Furthermore, since $\Psi(\{1\}, \{2\}, \dots, \{n\}) = I$, it follows that $\Psi$ is an isomorphism of monoids. This concludes the proof.

\end{proof}

Now we are in a position to prove the main result of the paper.

\begin{proof}[Proof of Theorem~\ref{thm:main}]
   Theorem~\ref{thm:Phi} gives the isomorphism
   \[
      \Phi : \operatorname{End}(K_n) \rightarrow M_n.
   \]
   Theorem~\ref{thm:Psi} shows that
   \[
      \Psi : M_n \rightarrow D_n
   \]
   is also an isomorphism.  Composing these two maps yields an isomorphism
   \[
      \Psi \circ \Phi : \operatorname{End}(K_n) \rightarrow D_n,
   \]
   so \(\operatorname{End}(K_n) \cong (D_n,\cdot)\).  This completes the proof of the main theorem.  \qedhere
\end{proof}

\section{Concluding remarks}

Even though the following proposition was proved in \cite{GK} in Proposition 20, we can prove it again with relative ease.
\begin{proposition}
   Let $n\in\mathbb{N}$. The only invertible element of $\operatorname{End}(K_n)$ is the identity. Consequently, $\operatorname{Aut}(K_n)$ is trivial.
\end{proposition}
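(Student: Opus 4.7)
The plan is to transfer the problem via the corollary $\operatorname{End}(K_n) \cong (D_n, \cdot)$ and determine the invertible elements of $(D_n, \cdot)$ directly. The argument splits naturally into two steps: first show that any invertible boolean matrix is a permutation matrix, then check that the only permutation matrix satisfying the forbidden-pattern condition of $D_n$ is the identity.

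For the first step, suppose $M, N \in \{0,1\}^{n \times n}$ satisfy $M \cdot N = I$ (one-sided invertibility already suffices). Since $(M \cdot N)_{ii} = \bigvee_k M_{ik} \wedge N_{ki} = 1$, for each $i$ I can pick $k_i$ with $M_{i,k_i} = N_{k_i,i} = 1$. Vanishing of $(M \cdot N)_{ij}$ for $i \neq j$ together with $M_{i,k_i} = 1$ forces $N_{k_i, j} = 0$ for all $j \neq i$, so row $k_i$ of $N$ is nonzero only at column $i$. In particular $k_i \neq k_j$ whenever $i \neq j$, so $i \mapsto k_i$ is a permutation of $\{1,\dots,n\}$. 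Now if $M_{ij} = 1$ for some $j$, the inequality $N_{jl} \leq (M \cdot N)_{il}$ forces row $j$ of $N$ to be nonzero only at column $i$; but $j = k_{i'}$ for a unique $i'$, and row $k_{i'}$ is concentrated at column $i'$, so $i' = i$ and $j = k_i$. Hence every row of $M$ has exactly one nonzero entry, and $M$ is a permutation matrix.

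For the second step, let $M \in D_n$ be a permutation matrix, say $M_{ab} = 1$ iff $a = \sigma(b)$ for some permutation $\sigma$. Fix $i < j$ and set $x = \sigma(j)$, $y = \sigma(i)$. Then $M_{xj} = M_{yi} = 1$ while $M_{xi} = M_{yj} = 0$, since $\sigma$ is injective and $i \neq j$. The defining condition of $D_n$ now forces $x > y$, i.e.\ $\sigma(j) > \sigma(i)$. As $i < j$ were arbitrary, $\sigma$ is strictly increasing, so $\sigma = \operatorname{id}$ and $M = I$.

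Combining the two steps, the only invertible element of $(D_n, \cdot)$ is $I$; via the isomorphism the only invertible element of $\operatorname{End}(K_n)$ is $\operatorname{id}_{K_n}$, and $\operatorname{Aut}(K_n)$ is therefore trivial. I expect the first step to be the main obstacle: the folklore fact that invertible boolean matrices are permutation matrices is intuitive, but pinning it down takes a short bookkeeping argument on rows and columns. The second step is then a direct unpacking of the forbidden-pattern definition.
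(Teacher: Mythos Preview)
Your proof is correct and follows essentially the same strategy as the paper: transfer to $D_n$ via the established isomorphism, show that invertible boolean matrices must be permutation matrices, and then verify that the only permutation matrix avoiding the forbidden pattern is the identity. Your bookkeeping in the first step is organized a bit differently (you work from one-sided invertibility and track rows of $N$ rather than columns of $A$), and you spell out the second step more explicitly than the paper does, but the overall route is the same.
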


\begin{proof}
   In \cite{Rutherford} it is shown that the only invertible $n \times n$ Boolean matrices are the permutation matrices.
   Since the only permutation matrix that is an element of $D_n$ is the identity matrix, it follows that the only invertible element of $D_n$ is the identity matrix. This concludes the proof.
\end{proof}

Finding an explicit formula for the cardinality of $\operatorname{End}(K_n)$ or equivalently of $D_n$ has proven to be challenging. The authors in \cite{Kitaev} study matrices avoiding certain patterns. If we denote by $c_{m, n}$ the number of $m \times n$ Boolean matrices avoiding the pattern (submatrix) \eqref{pattern}, they proposed a finite algorithm which, for each fixed $m \geq 1$, gives a closed formula for $c_{m, n}$ for any $n \geq 1$. This result (Corollary 13 in \cite{Kitaev}) is summarised in the following proposition.
\begin{proposition}
   Let $n \in \mathbb{N}$. Then we have
   \begin{itemize}
      \item $c_{2, n} = (3 + n) \cdot 3^{n-1}$,
      \vspace{0.2cm}
      \item $c_{3, n} = \frac{1}{3} \cdot (2 + n)(96 + 31n + n^2) \cdot 4^{n - 3}$,
      \vspace{0.2cm}
      \item $c_{4, n} = \frac{1}{36} \cdot (2812500 + 3963450n + 1862971n^2 + 339300n^3 + 21265n^4 + 510n^5 + 4n^6) \cdot 5^{n-7}$.
   \end{itemize}
\end{proposition} 
From this, one can determine the cardinality of $D_n$ for $2 \leq n \leq 4$.

\backmatter

\bmhead{Acknowledgements}

I would like to thank Ganna Kudryavtseva for her valuable mentorship, insightful guidance, and helpful suggestions throughout the preparation of this paper.
I would also like to thank the anonymous referee for their careful reading and valuable suggestions.

%

\begin{thebibliography}{17}
\ifx \bisbn   \undefined \def \bisbn  #1{ISBN #1}\fi
\ifx \binits  \undefined \def \binits#1{#1}\fi
\ifx \bauthor  \undefined \def \bauthor#1{#1}\fi
\ifx \batitle  \undefined \def \batitle#1{#1}\fi
\ifx \bjtitle  \undefined \def \bjtitle#1{#1}\fi
\ifx \bvolume  \undefined \def \bvolume#1{\textbf{#1}}\fi
\ifx \byear  \undefined \def \byear#1{#1}\fi
\ifx \bissue  \undefined \def \bissue#1{#1}\fi
\ifx \bfpage  \undefined \def \bfpage#1{#1}\fi
\ifx \blpage  \undefined \def \blpage #1{#1}\fi
\ifx \burl  \undefined \def \burl#1{\textsf{#1}}\fi
\ifx \doiurl  \undefined \def \doiurl#1{\url{https://doi.org/#1}}\fi
\ifx \betal  \undefined \def \betal{\textit{et al.}}\fi
\ifx \binstitute  \undefined \def \binstitute#1{#1}\fi
\ifx \binstitutionaled  \undefined \def \binstitutionaled#1{#1}\fi
\ifx \bctitle  \undefined \def \bctitle#1{#1}\fi
\ifx \beditor  \undefined \def \beditor#1{#1}\fi
\ifx \bpublisher  \undefined \def \bpublisher#1{#1}\fi
\ifx \bbtitle  \undefined \def \bbtitle#1{#1}\fi
\ifx \bedition  \undefined \def \bedition#1{#1}\fi
\ifx \bseriesno  \undefined \def \bseriesno#1{#1}\fi
\ifx \blocation  \undefined \def \blocation#1{#1}\fi
\ifx \bsertitle  \undefined \def \bsertitle#1{#1}\fi
\ifx \bsnm \undefined \def \bsnm#1{#1}\fi
\ifx \bsuffix \undefined \def \bsuffix#1{#1}\fi
\ifx \bparticle \undefined \def \bparticle#1{#1}\fi
\ifx \barticle \undefined \def \barticle#1{#1}\fi
\bibcommenthead
\ifx \bconfdate \undefined \def \bconfdate #1{#1}\fi
\ifx \botherref \undefined \def \botherref #1{#1}\fi
\ifx \url \undefined \def \url#1{\textsf{#1}}\fi
\ifx \bchapter \undefined \def \bchapter#1{#1}\fi
\ifx \bbook \undefined \def \bbook#1{#1}\fi
\ifx \bcomment \undefined \def \bcomment#1{#1}\fi
\ifx \oauthor \undefined \def \oauthor#1{#1}\fi
\ifx \citeauthoryear \undefined \def \citeauthoryear#1{#1}\fi
\ifx \endbibitem  \undefined \def \endbibitem {}\fi
\ifx \bconflocation  \undefined \def \bconflocation#1{#1}\fi
\ifx \arxivurl  \undefined \def \arxivurl#1{\textsf{#1}}\fi
\csname PreBibitemsHook\endcsname

\bibitem[\protect\citeauthoryear{Aragona and D’Andrea}{2013}]{aragona13}
\begin{barticle}
\bauthor{\bsnm{Aragona}, \binits{R.}},
\bauthor{\bsnm{D’Andrea}, \binits{A.}}:
\batitle{{Hecke-Kiselman} monoids of small cardinality}.
\bjtitle{Semigr. Forum}
\bvolume{86}(\bissue{1}),
\bfpage{32}--\blpage{40}
(\byear{2013})
\end{barticle}
\endbibitem

\bibitem[\protect\citeauthoryear{Ashikhmin et~al.}{2015}]{Ashikhmin15}
\begin{barticle}
\bauthor{\bsnm{Ashikhmin}, \binits{D.}},
\bauthor{\bsnm{Volkov}, \binits{M.}},
\bauthor{\bsnm{Zhang}, \binits{W.}}:
\batitle{The {Finite} {Basis} {Problem} for {Kiselman} {Monoids}}.
\bjtitle{Demonstr. Math.}
\bvolume{48}(\bissue{4}),
\bfpage{475}--\blpage{492}
(\byear{2015})
\end{barticle}
\endbibitem

\bibitem[\protect\citeauthoryear{Collina and D’Andrea}{2015}]{Graphs}
\begin{barticle}
\bauthor{\bsnm{Collina}, \binits{E.}},
\bauthor{\bsnm{D’Andrea}, \binits{A.}}:
\batitle{A graph-dynamical interpretation of {Kiselman’s} semigroups}.
\bjtitle{J. Algebr. Comb.}
\bvolume{41}(\bissue{4}),
\bfpage{1115}--\blpage{1132}
(\byear{2015})
\end{barticle}
\endbibitem

\bibitem[\protect\citeauthoryear{D'Andrea and Stella}{2023}]{dandrea23}
\begin{barticle}
\bauthor{\bsnm{D'Andrea}, \binits{A.}},
\bauthor{\bsnm{Stella}, \binits{S.}}:
\batitle{The cardinality of {Kiselman's} semigroups grows double-exponentially}.
\bjtitle{Bull. Belg. Math. Soc. Simon Stevin}
\bvolume{30}(\bissue{5}),
\bfpage{570}--\blpage{576}
(\byear{2023})
\end{barticle}
\endbibitem

\bibitem[\protect\citeauthoryear{Ganyushkin and Mazorchuk}{2011}]{ganyushkin11}
\begin{barticle}
\bauthor{\bsnm{Ganyushkin}, \binits{O.}},
\bauthor{\bsnm{Mazorchuk}, \binits{V.}}:
\batitle{On {Kiselman} quotients of 0-{Hecke} monoids}.
\bjtitle{Int. Electron. J. Algebra}
\bvolume{10}(\bissue{10}),
\bfpage{174}--\blpage{191}
(\byear{2011})
\end{barticle}
\endbibitem

\bibitem[\protect\citeauthoryear{Kiselman}{2002}]{Kiselman}
\begin{barticle}
\bauthor{\bsnm{Kiselman}, \binits{C.}}:
\batitle{A semigroup of operators in convexity theory}.
\bjtitle{Trans. Am. Math. Soc.}
\bvolume{354}(\bissue{5}),
\bfpage{2035}--\blpage{2053}
(\byear{2002})
\end{barticle}
\endbibitem

\bibitem[\protect\citeauthoryear{Kitaev et~al.}{2005}]{Kitaev}
\begin{barticle}
\bauthor{\bsnm{Kitaev}, \binits{S.}},
\bauthor{\bsnm{Mansour}, \binits{T.}},
\bauthor{\bsnm{Vella}, \binits{A.}}:
\batitle{Pattern avoidance in matrices}.
\bjtitle{J. Integer Seq.}
\bvolume{8}(\bissue{2}),
\bfpage{05}--\blpage{2216052216}
(\byear{2005})
\end{barticle}
\endbibitem


\bibitem[\protect\citeauthoryear{Kudryavtseva and Mazorchuk}{2009}]{GK}
\begin{barticle}
\bauthor{\bsnm{Kudryavtseva}, \binits{G.}},
\bauthor{\bsnm{Mazorchuk}, \binits{V.}}:
\batitle{On {Kiselman’s} semigroup}.
\bjtitle{Yokohama Math. J.}
\bvolume{55}(\bissue{1}),
\bfpage{22}--\blpage{46}
(\byear{2009})
\end{barticle}
\endbibitem

\bibitem[\protect\citeauthoryear{Lebed}{2025}]{Lebed2025}
\begin{barticle}
\bauthor{\bsnm{Lebed}, \binits{V.}}:
\batitle{The word problem for {Hecke--Kiselman} monoids of type {$A_n$} and {$\widetilde A_n$}}.
\bjtitle{Semigr. Forum}
\bvolume{110}(\bissue{3}),
\bfpage{597}--\blpage{614}
(\byear{2025})
\end{barticle}
\endbibitem

\bibitem[\protect\citeauthoryear{M{\k{e}}cel and Okni{\'{n}}ski}{2019}]{Mecel2019}
\begin{barticle}
\bauthor{\bsnm{M{\k{e}}cel}, \binits{A.}},
\bauthor{\bsnm{Okni{\'{n}}ski}, \binits{J.}}:
\batitle{Gr{\"o}bner basis and the automaton property of {Hecke--Kiselman} algebras}.
\bjtitle{Semigr. Forum}
\bvolume{99}(\bissue{2}),
\bfpage{447}--\blpage{464}
(\byear{2019})
\end{barticle}
\endbibitem

\bibitem[\protect\citeauthoryear{Norton}{1979}]{norton79}
\begin{barticle}
\bauthor{\bsnm{Norton}, \binits{P.}}:
\batitle{0-{Hecke} algebras}.
\bjtitle{J. Aust. Math. Soc.}
\bvolume{27}(\bissue{3}),
\bfpage{337}--\blpage{357}
(\byear{1979})
\end{barticle}
\endbibitem

\bibitem[\protect\citeauthoryear{Okni{\'n}ski and Wiertel}{2020}]{okninski20}
\begin{barticle}
\bauthor{\bsnm{Okni{\'n}ski}, \binits{J.}},
\bauthor{\bsnm{Wiertel}, \binits{M.}}:
\batitle{Combinatorics and structure of {Hecke--Kiselman} algebras}.
\bjtitle{Commun. Contemp. Math.}
\bvolume{22}(\bissue{07}),
\bfpage{2050022}
(\byear{2020})
\end{barticle}
\endbibitem

\bibitem[\protect\citeauthoryear{Okni{\'n}ski and Wiertel}{2021}]{okninski21}
\begin{barticle}
\bauthor{\bsnm{Okni{\'n}ski}, \binits{J.}},
\bauthor{\bsnm{Wiertel}, \binits{M.}}:
\batitle{On the radical of a {Hecke--Kiselman} algebra}.
\bjtitle{Algebras Represent. Theory}
\bvolume{24}(\bissue{6}),
\bfpage{1431}--\blpage{1440}
(\byear{2021})
\end{barticle}
\endbibitem

\bibitem[\protect\citeauthoryear{Rutherford}{1963}]{Rutherford}
\begin{barticle}
\bauthor{\bsnm{Rutherford}, \binits{D.E.}}:
\batitle{Inverses of {Boolean} matrices}.
\bjtitle{Glasg. Math. J.}
\bvolume{6}(\bissue{1}),
\bfpage{49}--\blpage{53}
(\byear{1963})
\end{barticle}
\endbibitem

\bibitem[\protect\citeauthoryear{Wiertel}{2022}]{wiertel22}
\begin{barticle}
\bauthor{\bsnm{Wiertel}, \binits{M.}}:
\batitle{Irreducible representations of {Hecke--Kiselman} monoids}.
\bjtitle{Linear Algebra Its Appl.}
\bvolume{640},
\bfpage{12}--\blpage{33}
(\byear{2022})
\end{barticle}
\endbibitem

\bibitem[\protect\citeauthoryear{Wiertel}{2023}]{wiertel23}
\begin{barticle}
\bauthor{\bsnm{Wiertel}, \binits{M.}}:
\batitle{The {Gelfand--Kirillov} dimension of {Hecke--Kiselman} algebras}.
\bjtitle{Forum Math.}
\bvolume{35}(\bissue{2}),
\bfpage{523}--\blpage{534}
(\byear{2023})
\end{barticle}
\endbibitem

\end{thebibliography}


\end{document}